\pgfplotsset{compat=newest}
\newcolumntype{x}[1]{>{\hfil$\displaystyle} p{#1} <{$\hfil}} 
\renewcommand{\div}{\textrm{div}\ \!}
\newtheorem{theorem}{Theorem}
\newtheorem{remark}{Remark}[section]
\newtheorem{proposition}{Proposition}[section]
\newenvironment{proof}{\paragraph{Proof}}{\hfill$\square$}
\def\ps@pprintTitle{%
	\let\@oddhead\@empty
	\let\@evenhead\@empty
	\let\@oddfoot\@empty
	\let\@evenfoot\@oddfoot
}
\begin{document}
\begin{frontmatter}
		\title{On stability of Euler flows on \\closed surfaces of positive genus}
	\author{Vladimir Yushutin \\yushutin@math.uh.edu}
	\address{Department of Mathematics, University of Houston, Houston, Texas 77204 }
%	\ead{yushutin@math.uh.edu}
	\begin{abstract}
	Incompressible flows of an ideal two-dimensional fluid on a closed orientable surface of positive genus are considered. Linear stability of harmonic, i.e. irrotational and incompressible, solutions to the Euler equations is shown using the Hodge-Helmholtz decomposition.  We also demonstrate that any surface Euler flow is stable with respect to harmonic velocity perturbations.
	\end{abstract}
\begin{keyword}
	Surface Euler equations; Hodge-Helmholtz decomposition; Stability; Exterior calculus.
\MSC 76E09 \sep 58J90
\end{keyword}

\end{frontmatter}

\section{Introduction}
	Recently{,} there has been a growing interest in the modeling of thin fluid flows {on} curved surfaces. Apart from its practical relevance, mathematical modeling of surface flows reveals many intriguing theoretical issues. 
	In this paper we investigate how the absence of {a} boundary and the surface genus affect the stability of solutions to the Euler equations on a two-dimensional manifold.
	
	Surfaces of positive genus have {``}handles" and admit divergence-free and curl-free (i.e. harmonic) vector fields. Harmonic velocity fields generate incompressible and irrotational flows, which are essential { in} theoretical fluid mechanics. Since the boundary is empty, all steady harmonic vector fields are solutions to Euler equation which {may} potentially drive the surface Euler flows to be unstable. It is worth noting that a harmonic vector field on a closed surface cannot be represented either by the gradient of a potential function or by the skew-gradient of a stream function.

	\section{Hodge-Helmholtz formulation of Euler equation}
	\label{general}
%	Surface Euler equations \eqref{Euler_extrinsic_form}, expressed in terms of tangential calculus on embedded surface, have an intrinsic formulation posed on a manifold.
	
	Let ($\mathcal{M}^2$, ${g}$) be a compact connected orientable two-dimensional Riemannian manifold without boundary. {Then t}here exist \cite{ebin1970groups} a smooth \textit{velocity} vector field $v$ and a smooth \textit{static pressure} function $p_s$ on $\mathcal{M}$ such that, for sufficiently smooth vector fields $F$ and $v_0$  on $\mathcal{M}$, the following Euler equations are satisfied:
	\begin{equation}
	\label{Euler_equation}
	\begin{gathered}
	v_t+{\overline{\nabla{}}_{v}}v + \overline{\nabla{}}p_{s} = F\,,\quad
	\div v = 0\,,\quad
	v\big|_{t=0}=v_0\,,
	\end{gathered}
	\end{equation}
	where $\overline{\nabla{}}$ is {the} Levi-Civita connection.
	%Metric tensor $g$ provides an inner product on tangent and cotangent spaces: $(\left<) v , u\right>=\left< v^\flat , u^\flat\right>=v^\flat(u)$. 
Having chosen {a} volume form $\mu$ associated with $g$, one {may define} {the} Hodge $\star{}$ operator, the Hodge inner product $\left<.,.\right>$, and codifferential $\delta$.
	
	\begin{proposition} Let $f$ be a function, $\alpha, \beta$ be covector fields. Consider $\eta$, a skew-symmetric bilinear form defined on $\mathcal{M}^2$ equipped with volume form $\mu$. {Then the} Hodge 	$\star$ operator  maps vectors to vectors, functions to bilinear forms and vice versa, and the following properties hold true \cite{abraham2012manifolds}:
\begin{equation*}
				\begin{aligned}[c]
					\star\!\star\!f&=f\\
				\star\!\star\!\alpha &= -\alpha\\
					\delta{}&=-\star\!d\star\\
					\left<\alpha,\beta\right >&=\int_\mathcal{M}	(\alpha, \beta) \wedge \mu
				\end{aligned}
				\qquad
				\qquad
				\qquad
				\begin{aligned}[c]
						\star \mu&=1\\
				\alpha \wedge \star \beta{}&=(\alpha, \beta) \wedge \mu\\
			\left<\delta\alpha,\beta\right >&=\left<\alpha,d\beta\right >\\
			\left<f,\star{}\eta\right >&=\int_\mathcal{M}	f \wedge \eta
				\end{aligned}
\end{equation*}
\end{proposition}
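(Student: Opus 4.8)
The plan is to read off every identity from the pointwise definition of the Hodge star together with the closedness of $\mathcal{M}$, treating the relation $\alpha\wedge\star\beta=(\alpha,\beta)\,\mu$ as the definition of $\star$ in terms of the metric-induced pointwise pairing $(\cdot,\cdot)$ and the volume form $\mu$. Integrating this relation over $\mathcal{M}$ immediately yields the Hodge inner product identity $\langle\alpha,\beta\rangle=\int_{\mathcal{M}}(\alpha,\beta)\,\mu$, so two of the eight identities are essentially definitional. The mapping property (``vectors to vectors, functions to bilinear forms and vice versa'') is nothing but the degree count that $\star$ sends a $k$-form to an $(n-k)$-form, specialized to $n=2$, giving $0\mapsto2$, $1\mapsto1$, $2\mapsto0$.

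First I would fix an oriented orthonormal coframe $\{e^1,e^2\}$ with $\mu=e^1\wedge e^2$ and compute the action of $\star$ on a basis: $\star 1=e^1\wedge e^2=\mu$, $\star e^1=e^2$, $\star e^2=-e^1$, and $\star(e^1\wedge e^2)=1$. The last equality is exactly $\star\mu=1$. Applying $\star$ a second time gives $\star\star 1=\star\mu=1$, $\star\star e^i=-e^i$, and $\star\star(e^1\wedge e^2)=\star 1=e^1\wedge e^2$; by linearity this is $\star\star f=f$ on functions, $\star\star\alpha=-\alpha$ on covectors, and $\star\star\eta=\eta$ on $2$-forms, reproducing the general rule $\star\star=(-1)^{k(n-k)}$ at $n=2$. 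The codifferential formula then follows by specializing the general identity $\delta=(-1)^{n(k+1)+1}\star d\star$ to $n=2$, where the exponent $2(k+1)+1$ is odd for every $k$ and the sign collapses to $\delta=-\star d\star$.

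Next I would dispose of the identity $\langle f,\star\eta\rangle=\int_{\mathcal{M}}f\wedge\eta$, in which $f$ is a function and $\eta$ a $2$-form, so $\star\eta$ is again a function. Writing the inner product of these two functions through the defining relation gives $\langle f,\star\eta\rangle=\int_{\mathcal{M}}f\wedge\star\star\eta$, and the already-established $\star\star\eta=\eta$ on $2$-forms collapses this to $\int_{\mathcal{M}}f\wedge\eta$.

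The one identity requiring a genuine argument is the adjointness $\langle\delta\alpha,\beta\rangle=\langle\alpha,d\beta\rangle$, and it is here that the empty boundary is used. Taking $\alpha$ a $k$-form and $\beta$ a $(k-1)$-form, I would apply the Leibniz rule to the $(n-1)$-form $\beta\wedge\star\alpha$ and integrate, using Stokes' theorem and $\partial\mathcal{M}=\emptyset$ to annihilate the exact term: $0=\int_{\mathcal{M}}d(\beta\wedge\star\alpha)=\int_{\mathcal{M}}d\beta\wedge\star\alpha+(-1)^{k-1}\int_{\mathcal{M}}\beta\wedge d\star\alpha$. The first integral is $\langle d\beta,\alpha\rangle$ by the defining relation. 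For the second I would write $\langle\beta,\delta\alpha\rangle=\int_{\mathcal{M}}\beta\wedge\star\delta\alpha=-\int_{\mathcal{M}}\beta\wedge\star\star d\star\alpha$ and apply $\star\star$ to the $(n-k+1)$-form $d\star\alpha$. The hard part will be precisely the sign bookkeeping in this last step, since the factor $(-1)^{k-1}$ must be matched against the degree-dependent sign of $\star\star$ on that intermediate form together with the sign in $\delta=-\star d\star$; in dimension two these conspire so that, for the relevant degrees, the equation reduces to $\langle d\beta,\alpha\rangle=\langle\beta,\delta\alpha\rangle$, which by symmetry of the inner product is the claimed adjointness.
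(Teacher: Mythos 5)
Your derivation is correct, but note that the paper does not prove this proposition at all: it is stated as a list of standard facts with a citation to Abraham--Marsden--Ratiu, so any self-contained argument is necessarily ``a different route.'' What you supply is the standard textbook derivation, and it buys the reader something the paper does not: an explicit verification that the sign conventions are mutually consistent, which matters because the paper later uses the adjointness identity in the form $\left<d(\widetilde{\omega}\wedge\widetilde{\omega}),\gamma_0\right>=\left<\widetilde{\omega}\wedge{}\widetilde{\omega},\delta\gamma_0\right>$ in the proof of Theorem~\ref{harmonic_stability}. The one place you defer the work --- the ``sign bookkeeping'' in the adjointness step --- does close, and it is worth recording the relevant case explicitly since the paper's statement pairs two covector fields $\alpha,\beta$ even though the identity only makes sense when $\deg\alpha=\deg\beta+1$ (you read the degrees correctly). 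For $\alpha$ a $1$-form and $\beta$ a function on the closed surface, Stokes gives
\begin{equation*}
0=\int_\mathcal{M} d(\beta\wedge\star\alpha)=\int_\mathcal{M} d\beta\wedge\star\alpha+\int_\mathcal{M}\beta\, d\star\alpha=\left<d\beta,\alpha\right>+\int_\mathcal{M}\beta\, d\star\alpha\,,
\end{equation*}
while $\left<\beta,\delta\alpha\right>=\int_\mathcal{M}\beta\wedge\star(-\star d\star\alpha)=-\int_\mathcal{M}\beta\, d\star\alpha$ because $\star\star$ is the identity on $2$-forms; the two displays combine to the claimed adjointness, and the general-degree check reduces to the parity identity $n(k+1)+1+(n-k+1)(k-1)\equiv k\pmod 2$, which holds for all $n,k$. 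With that step filled in, your proof is complete and consistent with the conventions the paper actually uses.
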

	
	The \textit{rotational} form of the dual Euler equations \eqref{Euler_equation} on a two-dimensional manifold can be obtained in terms of the {covector field $v^{\flat}$ dual to $v$} as follows:
	\begin{equation}
\label{dual_Euler_bernoulli}
\begin{gathered}
v^\flat_t + \star{}dv^\flat{}\wedge{}\star{}\!v^\flat{} +dp= F^\flat{}\,,\qquad
-\delta{}v^\flat=0\,,\qquad
v^\flat{}\big|_{t=0}=v_0^\flat\,,
\end{gathered}
\end{equation}
where vorticity function $\omega=\star{}dv^\flat{}$ is the curl of vector field $v$, and $p=p_s+\frac{1}{2} v^\flat{}(v )$ is {the} Bernoulli pressure.

 Hodge theory implies that an incompressible covector field on $\mathcal{M}$ can be uniquely decomposed into $L^2$-orthogonal terms:
		\begin{equation}
	\label{helm-hodge}
	v^\flat =\star{}d\psi  + \gamma{}\,,
\end{equation}
where {$\psi$} is {the} stream function and $\gamma$ is a harmonic covector field. By substituting \eqref{helm-hodge} into \eqref{dual_Euler_bernoulli},
%taking into account \eqref{rotational_convective} 
one derives the Hodge-Helmholtz formulation of the Euler equations (initial condition{s} omitted):
\begin{equation}
\begin{aligned}
\label{dual_streamfunction}
%\star{}d\frac{\pa{}\psi{}}{\pa{}t}+\frac{\pa{} \gamma{} }{\pa{}t} + \star{}d\!\star{}\!d\psi\wedge{}\left(\star\!\star{}\!d\psi{}  + \star\gamma{} \right) +dp= F^\flat{}\\
\star{}d\psi{}_t+\gamma{}_t + \omega{}\wedge{}\left(-d\psi{}  + \star\gamma{} \right) +dp = F^\flat{}\,,\qquad
-\delta{}d\psi{} =\omega\,.
\end{aligned}
\end{equation}

\begin{remark}
The Hodge-Helmholtz decomposition of the first equation of \eqref{dual_streamfunction} produces three independent relations thus matching with the number of unknowns ($\psi$,~$\omega{}$,~$\gamma{}$,~$p$). \end{remark}

%Kinetic energy breaks down to
%\begin{equation}
%	E=\left<d\psi,d\psi\right>+\left<\gamma,\gamma\right>
%\end{equation}

\section{Stability of harmonic solutions}\label{stability_harmonic}
Harmonic covector fields, {i.e} $v^\flat=\gamma{}$, have zero vorticity and thus represent all incompressible and irrotational velocity fields. The space of all harmonic vector {fields} is finite-dimensional, and its dimension {is equal to} the second Betti number{,} which is the doubled number of {``}handles" for a closed oriented surface with only one connected component; therefore, only a positive genus surface admits harmonic vector fields. 
Note that the image of a harmonic vector under {the} Hodge star operator is {again} harmonic.

We investigate stability of any harmonic solution ($\psi$, $\omega{}$, $\gamma{}$, $p$)=($0$, $0$, $\gamma{}_0$, $p_0$) of the Euler equation \eqref{dual_streamfunction} by the standard linearization:
\begin{align}
\label{harm_steady_sol}
\psi=0+\varepsilon\widetilde{\psi}+O(\varepsilon^2)\,,\quad
\gamma=\gamma_0+\varepsilon\widetilde{\gamma}+O(\varepsilon^2)\,,\quad p=p_0+\varepsilon\widetilde{p}+O(\varepsilon^2)\,.
\end{align}

\begin{theorem}
	\label{harmonic_stability}
	A harmonic Euler flow on a closed surface is at most polynomially (linearly) unstable in {the} $L^2$ norm.
\end{theorem}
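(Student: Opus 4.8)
The plan is to linearize the Hodge--Helmholtz system \eqref{dual_streamfunction} about the harmonic steady state and to exploit the $L^2$-orthogonality of the decomposition \eqref{helm-hodge} to split the linearized momentum balance into three independent relations. Substituting \eqref{harm_steady_sol} together with the induced $\omega=\varepsilon\widetilde\omega+O(\varepsilon^2)$, $\widetilde\omega=-\delta d\widetilde\psi$, and using that $\gamma_0$ is steady and harmonic while the background forcing is $F^\flat=dp_0$, the $O(\varepsilon)$ part of the first equation of \eqref{dual_streamfunction} reads
\begin{equation*}
\star d\widetilde\psi_t + \widetilde\gamma_t + \widetilde\omega\,\star\gamma_0 + d\widetilde p = 0, \qquad -\delta d\widetilde\psi = \widetilde\omega.
\end{equation*}
Here $\star d\widetilde\psi_t$ is coexact, $\widetilde\gamma_t$ is harmonic, $d\widetilde p$ is exact, while the coupling term $\widetilde\omega\,\star\gamma_0$ --- a function times a harmonic covector --- carries components in all three subspaces. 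Projecting onto the harmonic subspace $\cH$ annihilates the exact and coexact contributions and yields the decoupled evolution
\begin{equation*}
\widetilde\gamma_t = -\Pi_{\cH}\!\left(\widetilde\omega\,\star\gamma_0\right),
\end{equation*}
while the coexact and exact projections encode, respectively, the vorticity balance derived below and the equation fixing $\widetilde p$.

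Next I would track the perturbation through the vorticity. Applying $\star d$ to the rotational momentum equation \eqref{dual_Euler_bernoulli} and using $\delta v^\flat=0$ turns it into the transport law $\omega_t+(d\omega,v^\flat)=\star dF^\flat$; with $F^\flat=dp_0$ its linearization about $(0,0,\gamma_0,p_0)$ is
\begin{equation*}
\widetilde\omega_t + (d\widetilde\omega, \gamma_0) = 0.
\end{equation*}
Since $(d\widetilde\omega,\gamma_0)$ is the derivative of $\widetilde\omega$ along the divergence-free field $\gamma_0^\sharp$, multiplying by $\widetilde\omega$ and integrating over $\mathcal M$ gives $\tfrac{d}{dt}\|\widetilde\omega\|^2=-\int_{\mathcal M}\gamma_0^\sharp(\widetilde\omega^2)\,\mu=0$, so the enstrophy $\|\widetilde\omega(t)\|$ is conserved in time. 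This is the crucial gain: the perturbation vorticity is merely stirred, not amplified, by the harmonic background.

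With $\|\widetilde\omega\|$ bounded, the two nonharmonic slots are immediately controlled. Testing $-\delta d\widetilde\psi=\widetilde\omega$ against $\widetilde\psi$ and using the adjointness $\la\delta\alpha,\beta\ra=\la\alpha,d\beta\ra$ gives $\|d\widetilde\psi\|^2=-\la\widetilde\psi,\widetilde\omega\ra$, and a Poincar\'e (spectral gap) bound for the Hodge Laplacian on functions yields $\|d\widetilde\psi(t)\|\le C\|\widetilde\omega(0)\|$ uniformly in $t$. For the harmonic slot, the decoupled equation above together with boundedness of the finite-dimensional projection $\Pi_{\cH}$ and smoothness of $\gamma_0$ gives $\|\widetilde\gamma_t\|\le\|\Pi_{\cH}\|\,\|\star\gamma_0\|_{L^\infty}\,\|\widetilde\omega(0)\|=:C_0$, hence the at-most-linear growth $\|\widetilde\gamma(t)\|\le\|\widetilde\gamma(0)\|+C_0 t$. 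Finally, $L^2$-orthogonality of \eqref{helm-hodge} gives $\|\star d\widetilde\psi+\widetilde\gamma\|^2=\|d\widetilde\psi\|^2+\|\widetilde\gamma\|^2$, which is therefore bounded by a quadratic polynomial in $t$, so the velocity perturbation grows at most linearly in the $L^2$ norm --- establishing at most polynomial (linear) instability.

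I expect the main obstacle to be the harmonic component: unlike the coexact part, $\widetilde\gamma$ is not slaved to the conserved vorticity and can genuinely drift. The argument hinges on showing that its driving term $\Pi_{\cH}(\widetilde\omega\,\star\gamma_0)$ stays uniformly bounded in time, which in turn rests entirely on the conservation of $\|\widetilde\omega\|$ and on the finite-dimensionality of $\cH$; without enstrophy conservation one could not rule out faster-than-linear growth.
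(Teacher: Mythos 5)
Your proposal is correct and follows essentially the same route as the paper: linearize about the harmonic state, derive the vorticity transport equation and conservation of $\left<\widetilde\omega,\widetilde\omega\right>$ from $\delta\gamma_0=0$, control the stream-function part by the spectral gap of the Hodge Laplacian, and bound the drift of the harmonic component via the finite-dimensionality of $\cH$ and Cauchy--Schwarz. The only cosmetic differences are that you phrase the harmonic evolution through the projector $\Pi_{\cH}$ rather than an explicit orthonormal basis $\{h^i\}$, and you bound $\|d\widetilde\psi\|$ directly (which is in fact the quantity entering $\|\widetilde v^\flat\|$) where the paper bounds $\|\widetilde\psi\|$.
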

\begin{proof}
		To prove the statement, one estimates both parts of the velocity perturbation $\widetilde{v}^\flat=\star{}d\widetilde{\psi}+\widetilde{\gamma}$. {To begin o}ne substitutes \eqref{harm_steady_sol} into \eqref{dual_streamfunction} and derives the system that governs the evolution of small perturbations:
\begin{equation}
\begin{split}
\label{stab_harm}
\star{}d\widetilde{\psi}_t+\widetilde{\gamma}_t+ \widetilde{\omega}\wedge{}\star{}\!{\gamma_0}+d\widetilde{p}=0\,,\qquad
-\delta{}d\widetilde{\psi{}} =\widetilde{\omega}
\end{split}
\end{equation}

We first estimate the stream function perturbation by taking  $\star{}d$ of the first {equation in} \eqref{stab_harm} and obtain the transport equation for the vorticity perturbation along the main flow ${\gamma_0}$:
\begin{equation}
\label{transport}\widetilde{\omega}_t + \star{}(d\widetilde{\omega}\wedge\star{}{\gamma_0})=0
\end{equation}
Note that $L^2$-norm of $\widetilde{\omega}$ remains constant over time:
\begin{equation}
\begin{split}
-\left<\widetilde{\omega},\widetilde{\omega}\right>_t=-2\left<\widetilde{\omega}_t,\widetilde{\omega}\right>=2\left<\star{}(d\widetilde{\omega}\wedge\star{}{\gamma_0}),\widetilde{\omega} \right>=2\int_\mathcal{M} \widetilde{\omega}\wedge{}d\widetilde{\omega}\wedge\star{}{\gamma_0}&=\\=\int_\mathcal{M} d(\widetilde{\omega}\wedge{}\widetilde{\omega})\wedge\star{}{\gamma_0}=\left<d(\widetilde{\omega}\wedge{}\widetilde{\omega}),\gamma_0\right>=\left<\widetilde{\omega}\wedge{}\widetilde{\omega}, \delta{}\gamma_0\right>&=0
\end{split}
\label{vort_pert}
\end{equation}

 Using the first smallest positive eigenvalue $\lambda_{min}$ of Hodge-Laplacian $\delta{}d$, one bounds {the} $L^2$-norm of $\widetilde{\psi}$ by {the} $L^2$-norm of $\widetilde{\omega}$:
	\begin{align*}
	{\lambda_{min}}\left<\widetilde{\psi},\widetilde{\psi}\right>\leq\left<\widetilde{\psi},\delta{}d\widetilde{\psi}\right>&=\left<\widetilde{\psi},-\widetilde{\omega}\right>	\leq{} \left<\widetilde{\psi},\widetilde{\psi}\right>^\frac12 \left<\widetilde{\omega},\widetilde{\omega}\right>^\frac12,\\
	\left<\widetilde{\psi},\widetilde{\psi}\right>&\leq\frac{1}{\lambda_{min}^2}\left<\widetilde{\omega},\widetilde{\omega}\right>
			\end{align*}

	The evolution of the harmonic part of perturbation $\widetilde{\gamma}$ is studied by means of a Hodge-orthonormal basis $\{h^i\}$ of the finite-dimensional space of harmonic covector fields (summation over $i$ is assumed): 
	\begin{equation}\label{basis}
\widetilde{\gamma}=c_i(t) h^i\,.
	\end{equation}

	Taking {the} Hodge inner product of \eqref{stab_harm} with a basis covector field $h^k$ results in the following equation for $c_k$:
	$$\dot{c}_k(t)+\int_\mathcal{M}\widetilde{\omega}\wedge{}(\star{}\gamma{}_0,h^k)\mu=0$$
Note that $(\star{}\gamma{}_0,h^k)$ are constant functions. We {now} show that $\widetilde{\gamma}$ grows at most linearly in time using Cauchy-Schwarz inequality:
		$$|\dot{c}_k(t)|\leq\left(\int_\mathcal{M}\widetilde{\omega}^2\mu\right)^{1/2} \left(\int_\mathcal{M} (\star{}\gamma{}_0,h^k)^2\mu\right)^{1/2}=\left<\widetilde{\omega},\widetilde{\omega}\right>^\frac12\left(\int_\mathcal{M} (\star{}\gamma{}_0,h^k)^2\mu\right)^{1/2}\,,$$ 
	and {since} the right-hand side is constant over time due to \eqref{vort_pert}{, this concludes the proof}.
\end{proof}

\section{Stability of Euler flows with respect to harmonic perturbations}
\label{stability_harmonic_perturb}
Theorem \ref{harmonic_stability} shows that harmonic solutions to the Euler equations on a closed surface are linearly unstable. In this section{,} we would like to address the following question: is it possible {for} harmonic perturbations {to} destabilize an Euler flow on a closed surface with positive genus?

As in Section \ref{stability_harmonic}, let us perturb a solution ($\psi_0$, $\omega{}_0$, $\gamma{}_0$, $p_0$) of  \eqref{dual_streamfunction} by a harmonic perturbation $\widetilde{v}^\flat=\widetilde{\gamma}$:
\begin{equation}
\label{harm_perturbed}
\widetilde{\gamma}_t + \omega_0\wedge{}\star{}\!\widetilde{\gamma}+d\widetilde{p}=0\,.
\end{equation}

\begin{theorem}
	Any Euler flow on a closed surface is $L^2$-stable with respect to harmonic perturbations.
\end{theorem}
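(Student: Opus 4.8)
The plan is to prove a stronger statement than mere stability: the $L^2$ norm of the harmonic perturbation $\widetilde{\gamma}$ is in fact conserved along the flow, which yields $L^2$-stability (indeed neutral stability) immediately. The natural device is an energy estimate, testing the governing equation \eqref{harm_perturbed} against $\widetilde{\gamma}$ itself in the Hodge inner product and computing $\tfrac12\tfrac{d}{dt}\langle\widetilde{\gamma},\widetilde{\gamma}\rangle=\langle\widetilde{\gamma}_t,\widetilde{\gamma}\rangle$, with the aim of showing the right-hand side vanishes identically.

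First I would take the Hodge inner product of \eqref{harm_perturbed} with $\widetilde{\gamma}$, splitting it into three contributions. The time-derivative term is $\langle\widetilde{\gamma}_t,\widetilde{\gamma}\rangle=\tfrac12\tfrac{d}{dt}\langle\widetilde{\gamma},\widetilde{\gamma}\rangle$. The pressure term $\langle d\widetilde{p},\widetilde{\gamma}\rangle$ is disposed of by the adjointness $\langle d\widetilde{p},\widetilde{\gamma}\rangle=\langle\widetilde{p},\delta\widetilde{\gamma}\rangle$ recorded in the Proposition; since $\widetilde{\gamma}$ is harmonic it is co-closed, $\delta\widetilde{\gamma}=0$, so this term drops out. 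This step quietly uses that $\widetilde{p}$ is a genuine $0$-form, so that $d\widetilde{p}$ is exact and therefore Hodge-orthogonal to the harmonic field $\widetilde{\gamma}$; this is precisely what decouples the harmonic sector from the pressure.

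The crux is the advection term $\langle\omega_0\wedge\star\widetilde{\gamma},\widetilde{\gamma}\rangle$. Here I would use that $\omega_0$ is a $0$-form, so $\omega_0\wedge\star\widetilde{\gamma}=\omega_0\,\star\widetilde{\gamma}$ pointwise, and then invoke $\alpha\wedge\star\beta=(\alpha,\beta)\mu$ to write $\langle\omega_0\wedge\star\widetilde{\gamma},\widetilde{\gamma}\rangle=\int_{\mathcal{M}}(\omega_0\wedge\star\widetilde{\gamma})\wedge\star\widetilde{\gamma}=\int_{\mathcal{M}}\omega_0\,\star\widetilde{\gamma}\wedge\star\widetilde{\gamma}$. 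The essential observation is that on a two-dimensional manifold the Hodge star rotates a $1$-form by $\pi/2$, so $\star\widetilde{\gamma}$ is pointwise $g$-orthogonal to $\widetilde{\gamma}$; equivalently, $\star\widetilde{\gamma}\wedge\star\widetilde{\gamma}=0$ because any $1$-form wedged with itself vanishes. Either way $(\star\widetilde{\gamma},\widetilde{\gamma})\equiv 0$, so the advection term is identically zero, irrespective of the base vorticity $\omega_0$.

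Combining the three contributions gives $\tfrac{d}{dt}\langle\widetilde{\gamma},\widetilde{\gamma}\rangle=0$, hence $\|\widetilde{\gamma}(t)\|_{L^2}=\|\widetilde{\gamma}(0)\|_{L^2}$ for all $t$, which is exactly the asserted $L^2$-stability. I do not anticipate a serious obstacle: once the pointwise orthogonality $\star\widetilde{\gamma}\perp\widetilde{\gamma}$ is recognized, the argument collapses to a one-line energy identity. The only point deserving care is the contrast with Theorem \ref{harmonic_stability}: there the harmonic part could grow linearly because a nonzero vorticity perturbation $\widetilde{\omega}$ sourced $\dot{c}_k$, whereas a purely harmonic perturbation carries $\widetilde{\omega}=-\delta d\widetilde{\psi}=0$, removing that source entirely and allowing the $L^2$ norm to be exactly preserved.
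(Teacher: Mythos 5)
Your argument is correct and establishes the same (in fact explicitly stronger) conclusion: exact conservation of $\left<\widetilde{\gamma},\widetilde{\gamma}\right>$. The route differs slightly from the paper's. The paper expands $\widetilde{\gamma}=c_i(t)h^i$ in a Hodge-orthonormal basis of the finite-dimensional harmonic space, projects \eqref{harm_perturbed} onto each $h^i$ to get the linear system $\dot{c}_i=c_j\int_\mathcal{M}\omega_0\wedge h^i\wedge h^j$, and concludes from the skew-symmetry of $A_{ij}$ that $|c|^2$ is conserved. You instead test \eqref{harm_perturbed} directly against $\widetilde{\gamma}$ and kill the advection term via $\star\widetilde{\gamma}\wedge\star\widetilde{\gamma}=0$ (equivalently, pointwise orthogonality of $\widetilde{\gamma}$ and $\star\widetilde{\gamma}$); the pressure term drops by adjointness and $\delta\widetilde{\gamma}=0$, exactly as the paper drops it when pairing with the co-closed $h^i$. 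The two cancellations are the same algebraic fact in different clothing, since $c\cdot Ac=\int_\mathcal{M}\omega_0\wedge\widetilde{\gamma}\wedge\widetilde{\gamma}=0$ is precisely the antisymmetry of the wedge of $1$-forms. What the paper's finite-dimensional reduction buys is the explicit ODE structure (a skew-symmetric, even-dimensional system), which is reused from the decomposition \eqref{basis} of the previous section; what your direct energy identity buys is brevity and independence from any choice of basis. Your closing remark correctly identifies why this does not contradict Theorem \ref{harmonic_stability}: the linear growth there is sourced by a nonzero $\widetilde{\omega}$, which is absent for a purely harmonic perturbation. Both arguments share the standing assumption, implicit in \eqref{harm_perturbed} and in \eqref{basis}, that the perturbation remains in the harmonic sector for all time, so neither is more general than the other on this point.
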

\begin{proof} We employ decomposition \eqref{basis}.
Taking {the} Hodge inner product of \eqref{harm_perturbed} with a basis vector $h^i$ results in the following finite-dimensional system:
\begin{equation}
\label{fin_system}
\dot{c}_i=c_j\int_\mathcal{M}\omega_0\wedge{}h^i\wedge{}h^j\,.
\end{equation}
The matrix $A_{ij}=\int_\mathcal{M}\omega_0\wedge{}h^i\wedge{}h^j$ is skew-symmetric and even-dimensional. Hence the finite-dimensional system \eqref{fin_system} preserves the norm of the vector $c\,$: $\left|c\right|^2_t=c\cdot{}c_t=c\cdot Ac=A^Tc\cdot{}c=0$, and the harmonic perturbation $\widetilde{\gamma}$ remains bounded.
\end{proof} 

\section{Conclusions}
We have investigated the influence of a surface genus on the stability of solutions to surface Euler equations.  It was shown that harmonic solutions are at most linearly unstable with a factor proportional to the $L^2$-norm of {a} {perturbation} vorticity. We also prove{d} that Euler flows on a closed surface are not destabilized by harmonic perturbations.
\section{Acknowledgments}
The author gratefully thanks Boris Khesin who provided expertise that greatly assisted the research and also Maxim Olshanskii for constant interest in the work and helpful suggestions. 

\bibliographystyle{elsarticle-num}
\bibliography{bibliography}{}	

\begin{thebibliography}{1}
\expandafter\ifx\csname url\endcsname\relax
  \def\url#1{\texttt{#1}}\fi
\expandafter\ifx\csname urlprefix\endcsname\relax\def\urlprefix{URL }\fi
\expandafter\ifx\csname href\endcsname\relax
  \def\href#1#2{#2} \def\path#1{#1}\fi

\bibitem{ebin1970groups}
D.~G. Ebin, J.~Marsden, Groups of diffeomorphisms and the motion of an
  incompressible fluid, Annals of Mathematics (1970) 102--163.

\bibitem{abraham2012manifolds}
R.~Abraham, J.~E. Marsden, T.~Ratiu, Manifolds, tensor analysis, and
  applications, Vol.~75, Springer, 2012.

\end{thebibliography}
\end{document}